\newtheorem{lemma}{Lemma}
\newtheorem{theorem}{Theorem}
\newtheorem{definition}{Definition}
\newtheorem{remark}{Remark}
\def\BState{\State\hskip-\ALG@thistlm}
\renewcommand{\vec}[1]{\mathbf{#1}}
\title{\LARGE \bf Linear Model Regression on Time-series Data: \\Non-asymptotic Error Bounds and Applications}
\author{Atiye Alaeddini$^{\dagger}$, Siavash Alemzadeh$^{\ddagger}$, Afshin Mesbahi$^{\ddagger}$, and Mehran Mesbahi$^{\ddagger}$
	\thanks{$^{\dagger}$ Institute for Disease Modeling, 3150 139th Ave SE, Bellevue, WA, 98005, {\tt\small  aalaeddini@idmod.org}.
	A. Alaeddini thanks Bill \& Melinda Gates for their support and sponsorship through Global Good Fund.}
	\thanks{$^{\ddagger}$ William E. Boeing Department of Aeronautics and Astronautics, University of Washington, Seattle, WA, 98195-2400, {\tt\small \{alems,amesbahi,mesbahi\}@uw.edu}.
	The work of the last three authors has been supported by NSF SES-1541025.}
}
\begin{document}
	
	\maketitle
	\thispagestyle{empty}
	\pagestyle{empty}

	
	\begin{abstract}
		
		Data-driven methods for modeling dynamic systems have recently received considerable attention as they provide a mechanism for control synthesis directly from the observed time-series data.
		In the absence of prior assumptions on how the time-series had been generated, regression on the system model has been particularly popular.  In the linear case, the resulting least squares setup for model regression, not only provides a computationally viable method to fit a model to the data, but also provides useful insights into the modal properties of the underlying dynamics.
		Although probabilistic estimates for this model regression have been reported, deterministic error bounds have not been examined  in the literature, particularly as they pertain to the properties of the underlying system. 
			In this paper, we provide deterministic non-asymptotic error bounds for fitting a linear model to
			observed time-series data, with a particular attention 
		to the role of symmetry and eigenvalue multiplicity in the underlying system matrix.
		
		
		
		\noindent \\ Keywords: \emph{data-driven methods, linear regression, linear models, supervised learning}
		
	\end{abstract}
	
	\section{INTRODUCTION}
	\label{sec:intro}

	Recent advances in measurement and sensing technologies have lead to the availability of an unprecedented amount of data generated by complex physical, social, and biological systems such as turbulent flow, opinion dynamics on social networks, transportation, financial trading, and drug discovery.
	This so-called big data revolution has resulted in the development of efficient computational tools that utilizes the data generated by a dynamic system to reason about reduce order representations of this data, subsequently utilized for classification or prediction on the underlying model.
	Such techniques have been particularly useful when the derivation of models from first principles is prohibitively complex or infeasible.
In the meantime, utilizing data generated by the system directly for the purpose of control or estimation, poses a number of challenges, most notably for model-based control design techniques such as ${\cal H}_{2}$, ${\cal H}_{\infty}$, and model predictive control (MPC).
As such, it has become imperative to examine fundamental limits on fitting models to the time-series data, that can subsequently be used for model-based control synthesis.

One caveat of such an approach for a wide range of complex systems is the absence of the ability to excite the system with desired (persistent) inputs for the purpose of system identification~\cite{ljung1998system}. 
%
%
	More recently, data-driven identification has also been examined in the context of machine learning as an extension of classification or prediction, with less attention given to the ability to excite the system with persistent inputs.
	In this direction, non-asymptotic bounds for finite sample complexity were obtained in \cite{hardt2016gradient,campi2002finite,oymak2018non,dean2017sample,simchowitz2018learning} for the linear time-invariant systems.
	Maximum likelihood and subspace identification methods have been employed in \cite{boots2009learning} to learn linear systems with guaranteed stability.
	The problem was investigated in an online learning setup to find regret bounds on the average cost of linear quadratic (LQ) systems in~\cite{abbasi2011regret}.
	In the context of data-driven analysis of dynamical systems, Koopman analysis has also been used for operator-theoretic identification of nonlinear systems and their spectral properties \cite{mauroy2016linear, de2017controllability, brunton2016koopman}.
	One of the key elements used in the aforementioned identification methods is a linear regression step in order to fit a model
	to data; linear regression is in fact one of the backbone of what is known as supervised learning.\footnote{Where a linear model is trained for labeling future instances of incoming data.}
	In its most basic form, linear regression is used to find the system parameters by solving a least-squares minimization problem constructed on the observed  time-series. Examples of such an approach can be found in \cite{fiechter1997pac,dean2017sample,parsa2018hierarchical}.

	
	The error analysis for fitting a linear dynamic system to data presented in this work is closely related to  error estimates examined in~\cite{fiechter1997pac} and \cite{dean2017sample} for linear quadratic synthesis. In both works, control synthesis involves an intermediate step of parameterizing the underlying system using the collected data; subsequently, probabilistic guarantees on the error between the true and the estimated models are presented.
		In \cite{fiechter1997pac}, the underlying system is allowed to be excited by canonical inputs before time-series data is collected following each ``episode".
	The same approach has been adopted in \cite{dean2017sample}, where a Gaussian noise is used to excite the system.
	While meaningful upper bounds on the error of the estimate are examined in these works, the presented results are probabilistic in nature, with probabilistic bounds that  are directly related to the number of data points.
	In this work, we provide non-asymptotic error bounds for adopting a regression approach to fit a linear model to data generated by the system, evolving from initial conditions and without a control input.
 Furthermore, this error bound is analyzed in an online (non-asymptotic) manner as more data becomes available.
 It is shown that the error guarantees are closely related to the system parameters, the rank of the collected data, and not surprisingly, the initial conditions.
 We then focus on the case where the underlying linear model is symmetric and show that the modeling regression error depends on the spectral properties of the system.
	
	
	\color{black}

	The organization of the paper is as follows:
	In \S\ref{sec:math-prel}, we provide the necessary mathematical background.
	The problem setup is outlined in \S\ref{sec:prob-setup}.
	\S\ref{sec:analysis} provides the error analysis and main results of the paper.
	We then examine the ramification of our results for fitting a linear model to network data in \S\ref{sec:sims}.
	The paper is concluded in \S\ref{sec:conclusion}.
	

	\section{Notation and Preliminaries}
	\label{sec:math-prel}
	The {cardinality} of a set $S$ is denoted as $|S|$.
	A column vector with $n$ real entries is denoted as $\vec{v} \in\mathbb{R}^n$, where $\vec{v}_i$ represents its $i$th element.
	The matrix $M\in \mathbb{R}^{p\times q}$ contains $p$ rows and $q$ columns and $[M]_{ij}$ denotes its (real)
	entry at the $i$th row and $j$th column.
	The Moore-Penrose pseudo-inverse of a full-rank matrix  $M\in \mathbb{R}^{p\times q}$ is defined as $M^{\dagger}=(M^\top M)^{-1}M^\top$ if $p>q$ and $M^{\dagger}=M^\top(MM^\top)^{-1}$ otherwise;
	$M^\top$ denotes the transpose of the matrix.
	%
	%
	The \emph{range} and \emph{nullspace} of matrix $M$ are denoted by $\mathcal{R}(M)$ and $\mathcal{N}(M)$, respectively.
	The \emph{basis} of the vector space $\mathcal{V}$ is referred to as $\mathcal{B}(\mathcal{V})$; the span 
	of a set of vectors, denoted by ${\textit{\bf span}}$, is the set of all linear combinations of these vectors. 
	%
	The unit vector $\vec{e}^i$ is the column vector with all zero entries except $[\vec{e}^i]_i=1$.
	The column vector of all ones is denoted by $\textbf{1}$, with dimension implicit from the context.
	The $n \times n$ identity matrix is defined as $I_n=\text{\bf Diag}(\textbf{1})$ for $\vec{1}\in\mathbb{R}^n$.
	The {trace} of $M\in\mathbb{R}^{n\times n}$ is designated as $\textit{\bf Tr} \text (M)=\sum_{i=1}^n [M]_{ii}=\sum_{i=1}^n \lambda_i$, where $\lambda_i$ is the $i$th eigenvalue of $M$.
	We write $M\succ 0$ when $M$ is positive-definite (PD) and	$M\succeq 0$ if $M$ is positive-semidefinite (PSD).
	The \textit{spectrum} of $M$ (the set of its eigenvalues) is denoted by $\mathbf{\Lambda}(M)$.
	The \textit{algebraic multiplicity} of an eigenvalue $\lambda$ is denoted by $m(\lambda)$, defined as the multiplicity of $\lambda$ in $\mathbf{\Lambda}(M)$.
	An eigenvalue $\lambda$ is called \emph{simple} if $m(\lambda)=1$.
	The singular value decomposition (SVD) of a matrix $X \in \mathbb{R}^{n\times m}$ is the factorization $X= {U} \Sigma {V}^\top$, where the unitary matrices ${U}$ and ${V}$ consist of the left and right ``singular" vectors of $X$, and $\Sigma$ is the diagonal matrix of singular values.
	Economic SVD is the reduced order matrix obtained by truncating the factor matrices in the SVD to the first $r$ columns of $U$ and $V$, corresponding to the $r$ non-zero singular values in $\Sigma$, where $r=\text{\bf rank}(X)$.
	From the SVD of a given matrix $X$, one can find its pseudo-inverse as $X^{\dagger}={V}\Sigma^{-1}{U}^\top$, resulting in $XX^{\dagger}={U}{U}^\top$; when $\Sigma$ has zero diagonals, the aforementioned inverse keeps these zeros untouched.
	The \textit{Euclidean norm} of a vector $\vec{x} \in\mathbb{R}^n$ is defined as $\| \vec{x} \|_2=(\vec{x}^\top \vec{x})^{1/2}=(\sum_{i=1}^n x_i^2)^{1/2}$.
	The \textit{spectral norm} of matrix $X$ is defined as $\|X\|_2=\sup\{\| X \vec{u} \|_2: \|\vec{u} \|_2=1\}$. 
	The Frobenius norm of a matrix $\| X \|_F$ is defined as $\| X \|_F = \sqrt{\textit{\bf Tr} (X^\top X)}$.
	%
	Spectral and Frobenius norms satisfy the inequality $\|X\|_2 \leq \|X\|_F \leq r\|X\|_2$, where $r=\text{\bf rank}(X)$.
	
	\section{Problem Setup}
	\label{sec:prob-setup}
	
	For a wide range of real-world systems, the underlying complex dynamics makes deriving the corresponding 
	models from first principles difficult if not infeasible.
	This can be due to a range of factors from the unpredictable nature of the environment to perturbations and uncertainties in the complex system \cite{alemzadeh2018influence,dean2017sample}.
	However, with the availability of sensing technologies and high performance computing, time-series data can be collected from these systems.
	Hence, it becomes natural to consider to what extent the observed time-series
	can be used to reason about the underlying dynamic model.
 In the case when this data has been generated by simulations (a model, albeit complex, already exists), 
 one might still be interested to reason about the dynamics using ``simple" models.
The adoption of this approach involves using prior knowledge of the underlying dynamics to choose a particular basis or library, and then postulating that the dynamic system is some combination of these basis elements. This problem then reduces to a parameter optimization problem -with respect to these basis or library- for their combination that best fits the given data, with respect to a suitable norm or metric.
In the absence of any prior assumption on the dynamics, however, it is often desirable to explore simple models.
%
	This paper examines non-asymptotic error bounds for doing such a linear fit, for the case when the data had been generated by a linear system; generically, it is the case that if the data is rich enough and the system does not have degeneracies, exact model is obtained after the number of data snapshots is the dimension of the system.\footnote{The model has $n^{2}$ unknown entries; as such, $n^{2}$ observations are generically needed for its exact recovery. One of course can get away with less data by invoking sparsity (say, using the $\ell_1$ regularization) or structure on the model, e.g, assuming an underlying pattern for the system matrix.}
	In fact, we show that even in this most streamlined case, and even in the absence of noise or uncertainty in the collected data, understanding non-asymptotic behavior of the error requires some non-trivial analysis. 

	Consider the discrete linear time-invariant system described by the state equation,
	\begin{align}
	\label{discLTI}
	\vec{x}_{t+1} = A \vec{x}_t, \hspace{10mm} t=0,1,2,\ldots,
	\end{align}
	where $A\in \mathbb{R}^{n\times n}$ is the unknown system matrix, $\vec{x}_t\in\mathbb{R}^n$ is the state of the system at time $t$, and the system has been initialized from $\vec{x}_0$.
	The state snapshots collected up to (and including) time $k$ can now be grouped as,
	\begin{equation} 
		\label{dataMatrix}
		X_k=[\vec{x}_0\quad \vec{x}_1 \quad \dots \quad \vec{x}_k]\ , \; Y_k=[\vec{x}_1 \quad \vec{x}_2 \quad \dots \quad \vec{x}_{k+1}].
	\end{equation}
	This data collection approach is analogous to the first step of the so-called dynamic mode decomposition (DMD) algorithm~\cite{tu2013dynamic}, where this step is followed by the parameterization of the eigenvalues and eigenvectors of the underlying system model.\footnote{The main objective of DMD is however not ``model" regression per se, as it is "modal" fitting, in order to provide useful insights into the underlying (possibly nonlinear) dynamics.}
	Now to estimate the underlying system matrix at each $k$, we consider the the least-squares minimization, $\hat{A}_k = \text{argmin}_A \|Y_k-AX_k\|_F,$ whose solution is of the form $\hat{A}_k=Y_kX_k^{\dagger}$; see Fig.~\ref{ahat}. Note that when ${\textit{\bf rank}}(X_k)=n$, $\hat{A}_k=AX_kX_k^{\dagger}=AX_kX_k^\top(X_kX_k^\top)^{-1}=~A.$
	\begin{figure}[H]
		\begin{center}
			\includegraphics[scale=0.27]{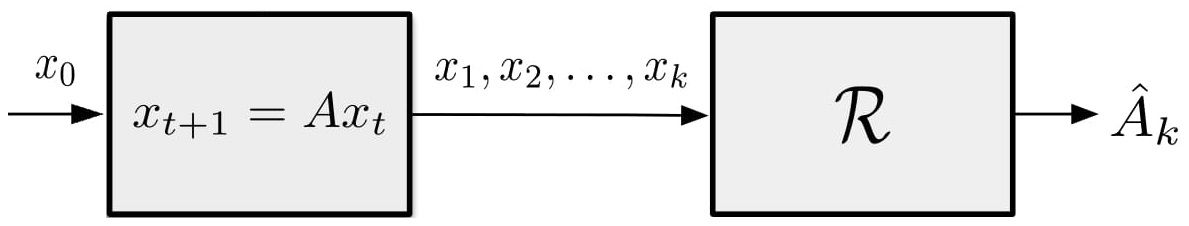}
		\end{center}
		\vspace{-1mm}
		\caption{Estimating the underlying dynamics $A$ after $k$ data snapshots using the model regression ${\cal R}$} \label{ahat}
		\vspace{-2mm}
	\end{figure}
	\noindent The focus of this paper is on the analysis of the error $\|A-\hat{A}_k\|$, i.e., the non-asymptotic error between the original and estimated models, using linear regression when ${\textit{\bf rank}}(X_k) <~n$. 
	%
Our work considers an \textit{online} estimation of the model $A$, where each new data snapshot is added to the previously collected set.
		At any time-step $k$, an estimate for $A$ is found based on the received data up to $k$.
	The resulting data-driven recursive minimization is depicted in Fig.~\ref{ahat}.
	Although not discussed further in this paper, we note that diagram in Fig.~\ref{ahat} can -in principle- be augmented with a model-based control or filtering scheme that utilizes $\hat{A}_k$ after a suitable number of steps.
	%
	In \S\ref{sec:analysis}, we first introduce an upper bound on the estimation error as a function of the system dynamics $A$, the iteration count $k$, dimension of the system $n$, and the initial conditions $\vec{x}_0$.
In particular, we show that for each time-step, the left-singluar vectors of the SVD of the data matrix dictate the estimation error bound.
	Next, we focus on symmetric system matrices. In this case, it is shown that the model regression error can be characterized by the multiplicities
	in the spectrum of the underlying system.
	
	
	\section{Non-asymptotic Error Analysis}
	\label{sec:analysis}
	
	
	In this section, we examine the error bound for the linear system regression in \eqref{discLTI} based on the system characteristics and the observed data snapshots.
	We assume that $k<n$, i.e., the number of data snapshots is less than the size of the system.
	The next results, characterizes the regression estimation error as a function of the time step $k$.
	
	\vspace{1mm}
	
	\begin{theorem}
		\label{thm:generalGuarantee}
		Consider the system in \eqref{discLTI} and the corresponding data matrix.
Let $X_k={U}_k \Sigma_k {V}_k^\top$ be the SVD of $X_k$.
		Then the model estimate at time-step $k$ is given by $\hat{A}_k=A(I-E_k)$ where,\footnote{Note that we are quantifying the relative error in (\ref{eq:error}).}
		\begin{align}
		\label{eq:error}
		E_k = \left( I - \frac{S_k P_k}{\textit{\bf Tr}{(S_k P_k)}} \right) S_k\,,
		\end{align}
		with,
		\begin{equation} 
		\begin{aligned}
		\label{eq:int_S_Omega}
		S_k = I - {U}_{k-1} {U}_{k-1}^\top, \quad
		P_k = \vec{x}_k\vec{x}_k^\top = A^k \vec{x}_0 \vec{x}_0^\top A^{k^\top}\,.
		\end{aligned}
		\end{equation}
		Moreover,
		\begin{align}
		\label{eq:bound1}
		\|E_k\|_2 \leq \left\| I -\frac{S_k P_k}{\textit{\bf Tr}{(S_k P_k)}} \right\|_2 .
		\end{align}
	\end{theorem}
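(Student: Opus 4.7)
The plan is to reduce the theorem to an identity about how the orthogonal projector onto the column space of $X_k$ updates when the new snapshot $\vec{x}_k$ is appended to $X_{k-1}$, and then recognize the claimed $E_k$ as exactly that rank-one correction.

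First I would start from $Y_k = A X_k$, so that $\hat{A}_k = A X_k X_k^{\dagger}$. Using the SVD identity $X_k X_k^{\dagger} = U_k U_k^{\top}$ stated in the preliminaries, I can write $\hat{A}_k = A\bigl(I - (I - U_k U_k^{\top})\bigr)$, which already matches the form $\hat{A}_k = A(I - E_k)$ provided I prove the identification
\begin{equation*}
E_k \;=\; I - U_k U_k^{\top}.
\end{equation*}
So the entire content of the theorem is the explicit, recursive formula for this residual projector.

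Next I would exploit the partition $X_k = [\,X_{k-1}\;\;\vec{x}_k\,]$. Assuming the snapshot $\vec{x}_k$ lies outside $\mathcal{R}(X_{k-1})$ (otherwise $\text{Tr}(S_k P_k) = 0$ and the formula degenerates, which is precisely the degenerate case the theorem tacitly excludes), the orthogonal projector onto $\mathcal{R}(X_k)$ is obtained from the projector onto $\mathcal{R}(X_{k-1})$ by adding the rank-one projector onto the normalized component of $\vec{x}_k$ orthogonal to $\mathcal{R}(X_{k-1})$. Letting $\vec{q}_k = S_k \vec{x}_k$, this Gram--Schmidt-type update gives
\begin{equation*}
U_k U_k^{\top} \;=\; U_{k-1} U_{k-1}^{\top} + \frac{\vec{q}_k \vec{q}_k^{\top}}{\vec{q}_k^{\top}\vec{q}_k}.
\end{equation*}
Using that $S_k$ is symmetric and idempotent, I would simplify $\vec{q}_k \vec{q}_k^{\top} = S_k P_k S_k$ and $\vec{q}_k^{\top}\vec{q}_k = \vec{x}_k^{\top} S_k \vec{x}_k = \textit{\bf Tr}(S_k P_k)$. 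Subtracting from $I$ then yields
\begin{equation*}
I - U_k U_k^{\top} \;=\; S_k - \frac{S_k P_k S_k}{\textit{\bf Tr}(S_k P_k)} \;=\; \Bigl(I - \tfrac{S_k P_k}{\textit{\bf Tr}(S_k P_k)}\Bigr) S_k,
\end{equation*}
which is exactly $E_k$. The representation $P_k = A^k \vec{x}_0 \vec{x}_0^{\top} A^{k\top}$ then follows from iterating the dynamics \eqref{discLTI}.

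Finally, for \eqref{eq:bound1}, I would apply submultiplicativity of the spectral norm to the factored form of $E_k$, together with the fact that $S_k$ is an orthogonal projector and hence $\|S_k\|_2 \le 1$. The main obstacle, if any, is a bookkeeping one: justifying the rank-one projector update cleanly (in particular the symmetry/idempotency manipulations that collapse $S_k^{\top} S_k$ to $S_k$ and convert the quadratic form $\vec{x}_k^{\top} S_k \vec{x}_k$ into $\textit{\bf Tr}(S_k P_k)$) and flagging the non-degeneracy assumption $\vec{x}_k \notin \mathcal{R}(X_{k-1})$ that makes the denominator nonzero.
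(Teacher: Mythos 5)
Your proposal is correct, but it takes a genuinely different route from the paper. The paper computes $X_k^{\dagger}=(X_k^\top X_k)^{-1}X_k^\top$ explicitly by block-inverting
\(X_k^\top X_k\)
via a Schur-complement calculation (introducing the scalar $\zeta=\vec{x}_k^\top S_k \vec{x}_k$ and the blocks $\Phi,\Psi_1,\Psi_2$), and then assembles $\hat{A}_k=AX_kX_k^{\dagger}$ term by term until the expression for $E_k$ emerges. You instead observe that $X_kX_k^{\dagger}=U_kU_k^\top$ is the orthogonal projector onto $\mathcal{R}(X_k)$ and that appending $\vec{x}_k$ updates this projector by the rank-one Gram--Schmidt correction $\vec{q}_k\vec{q}_k^\top/(\vec{q}_k^\top\vec{q}_k)$ with $\vec{q}_k=S_k\vec{x}_k$; the identities $\vec{q}_k\vec{q}_k^\top=S_kP_kS_k$ and $\vec{q}_k^\top\vec{q}_k=\textit{\bf Tr}(S_kP_k)$ then give $E_k=I-U_kU_k^\top$ in exactly the claimed factored form. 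Your route buys two things: it makes the geometric content of the theorem transparent (namely that $E_k$ is nothing but the residual projector onto $\mathcal{N}(X_k^\top)$, which the paper's final expression obscures), and it avoids the paper's tacit requirement that $X_{k-1}^\top X_{k-1}$ be invertible, needing only the nondegeneracy condition $\vec{x}_k\notin\mathcal{R}(X_{k-1})$ that you correctly flag (the same condition that keeps the paper's $\zeta$ nonzero). What the paper's computation buys in exchange is a fully explicit formula for $X_k^{\dagger}$ itself, which is not needed for the stated result. Your treatment of the bound \eqref{eq:bound1} via submultiplicativity and $\|S_k\|_2\le 1$ coincides with the paper's.
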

	
	
	\begin{proof}
		From \eqref{dataMatrix}, $X_k = \begin{bmatrix} X_{k-1} & A^k \vec{x}_0\end{bmatrix}$, and $Y_k=AX_k = A \begin{bmatrix} X_{k-1} & A^k \vec{x}_0\end{bmatrix}$.
		Then the estimate of the system matrix after the $k$-th snapshot is given by $\hat{A}_k=Y_k X_k^{\dagger}$, where $\hat{A}_k$ is the least-squares solution to $AX_k=Y_k$.
		Thus,
		\begin{align}
		\label{eq:estimate}
		\hat{A}_k = Y_k X_k^{\dagger} = AX_kX_k^{\dagger} = A \begin{bmatrix} X_{k-1} & A^k \vec{x}_0\end{bmatrix} X_k^{\dagger}\ .
		\end{align}
		Hence we need to characterize $X_k^{\dagger}$.
		To this end, we start from $X_k^{\dagger} = \left( X_k^\top X_k \right)^{-1} X_k^\top$.
	We first note that,
		\begin{align*}
		X_k^\top X_k &= \begin{bmatrix}
		X_{k-1}^\top \\ \vec{x}_0^\top A^{k^\top}
		\end{bmatrix}
		\begin{bmatrix}
		X_{k-1} & A^k \vec{x}_0
		\end{bmatrix} \\
		&= \begin{bmatrix}
		X_{k-1}^\top X_{k-1} & X_{k-1}^\top A^k \vec{x}_0 \\ \vec{x}_0^\top A^{k^\top} X_{k-1} & \vec{x}_0^\top A^{k^\top} A^k \vec{x}_0
		\end{bmatrix}\,.
		\end{align*}
		Then 
		\begin{align*}
		\left( X_k^\top X_k \right)^{-1} = \frac{1}{\zeta} \begin{bmatrix}
		\Phi & -X_{k-1}^{\dagger} A^k \vec{x}_0 
		\\ -\vec{x}_0^\top A^{k^\top} X_{k-1}^{{\dagger}^\top} 
		& 1
		\end{bmatrix},
		\end{align*}
		where
		\begin{align*}
		\Phi &= \left(X_{k-1}^\top X_{k-1}\right)^{-1} \left[ \zeta I + X_{k-1}^\top A^k \vec{x}_0 \vec{x}_0^\top A^{k^\top} X_{k-1}^{{\dagger}^\top} \right], \\
		\zeta &= \vec{x}_0^\top A^{k^\top} \left[ I - X_{k-1} \left(X_{k-1}^\top X_{k-1}\right)^{-1} X_{k-1}^\top \right] A^k \vec{x}_0 \\
		&= - \vec{x}_0^\top A^{k^\top} \left[ X_{k-1} X_{k-1}^{\dagger} - I \right] A^k \vec{x}_0\ .
		\end{align*}
		In the meantime,
		$X_k^{\dagger} = \left( X_k^\top X_k \right)^{-1} X_k^\top \begin{bmatrix} 
		\Psi_{1} \\ \Psi_{2}
		\end{bmatrix}$,
		with,
		\begin{align*}
		\Psi_{1} &= X_{k-1}^{\dagger} + \frac{1}{\zeta} X_{k-1}^{\dagger} A^k \vec{x}_0 \vec{x}_0^\top A^{k^\top} \left({U}_{k-1} {U}_{k-1}^\top - I \right), \\
		\Psi_{2} &= - \frac{1}{\zeta} \vec{x}_0^\top A^{k^\top} \left({U}_{k-1} {U}_{k-1}^\top- I\right),
		\end{align*}
		where we have used the fact $X_{k-1}X_{k-1}^{\dagger}={U}_{k-1}{U}_{k-1}^\top$.
		Thereby, we can expand $\hat{A}_k$ from \eqref{eq:estimate} as,
		\begin{equation} \nonumber
		\begin{aligned}
		&\hat{A}_k = AX_kX_k^{\dagger} = A \left( {U}_{k-1} {U}_{k-1}^\top  \right)\\
		&+ A \left(\frac{1}{\zeta} \left({U}_{k-1} {U}_{k-1}^\top- I\right) A^k \vec{x}_0 \vec{x}_0^\top A^{k^\top} \left({U}_{k-1} {U}_{k-1}^\top - I \right)\right)\\
		&= A \left( {U}_{k-1} {U}_{k-1}^\top   \right)\\
		&- A \left( \frac{ \left({U}_{k-1} {U}_{k-1}^\top - I\right) A^k \vec{x}_0 \vec{x}_0^\top  A^{k^\top} \left({U}_{k-1} \mathcal{U}_{k-1}^\top - I \right)}{\vec{x}_0^\top A^{k^\top} \left({U}_{k-1} {U}_{k-1}^\top - I \right) A^k \vec{x}_0 } \right)\, ,
		\end{aligned} 
		\end{equation}
		and from \eqref{eq:int_S_Omega}, the estimated model $\hat{A}_k$ at time-step $k$ is given by $\hat{A}_k = A \left( I - E_k \right)$ with,
		\begin{align*}
		E_k &= \frac{\Big(\textit{\bf Tr}{(S_k P_k)} I -S_k P_k \Big) S_k}{\textit{\bf Tr}{(S_k P_k)}} = \left( I - \frac{S_k P_k}{\textit{\bf Tr}{(S_k P_k)}} \right) S_k.
		\end{align*} 
		The magnitude of this error simplifies for the case of the spectral norm as,
		\begin{align*}
		\|E_k\|_2 = \left\| \left( I - \frac{S_k P_k }{\textit{\bf Tr}{(S_k P_k)}} \right) S_k \right\|_2 \leq \left\| I -\frac{S_k P_k}{\textit{\bf Tr}{(S_k P_k)}} \right\|_2 ,
		\end{align*}
		since $\|S_k\|_2=1$ for $k<n$.
	Lastly, we note that $S_k$ is the projection onto the null space of $X_{k-1}$ and $P_k$ is the covariance matrix of the data at time-step $k$; as such both matrices are positive-semidefinite.		
		\end{proof}
	\vspace{1mm}	

	We note that the relation (\ref{eq:error}) captures -in a succinct way- the dependency of the model regression error on how new modes are revealed by the data stream over time.
	%

%
%
	
	\subsection{Non-asymptotic Error Analysis for Symmetric Systems}
	
	\label{section:Symmetry}
	
	In this section, we consider linear systems with symmetric dynamics with the aim of characterizing fundamental bounds on the regression error in terms of the spectral properties of the system.
	This insight into the regression error is achieved through the spectral decomposition of the
	system matrix,
		\begin{align}
	\label{eq:symmetry}
	A=Q\Lambda Q^\top=\sum_{i=1}^r \lambda_i\vec{q}_i\vec{q}_i^\top,
\end{align}
	where $Q$ is the unitary matrix containing the eigenvectors corresponding to nonzero eigenvalues of $A$, $\Lambda$ is the diagonal matrix of nonzero eigenvalues, and $r=\textbf{rank}(A)$.
	Symmetric system matrices appear in a wide range of applications where interactions leading to the dynamics is bidirectional; such systems are of interest in biological networks~\cite{yeung2002reverse}, social interactions~\cite{alemzadeh2017controllability}, robotic swarms~\cite{bullo2009distributed}, and networks security~\cite{alaeddini2017adaptive}.
Using this spectral decomposition of symmetric systems, we show
that the regression error is dependent on the multiplicity of eigenvalues in $A$.
In particular, we show that if $m(\lambda)=1, \forall \lambda\in\mathbf{\Lambda}(A)$, then the
upper bound \eqref{eq:bound1} is a function of the largest and smallest singular values of the
system matrix as well as its rank. Otherwise, the regression error is 
$\max_{i:m(\lambda_i)>1} |\lambda_i|$.
We provide the details of the approach for each case.
	
	\subsubsection{Simple Eigenvalues}
	
	\label{section:allSimple}
	We first consider the case when the symmetric system matrix has simple eigenvalues and rank $r$.
	In reference to (\ref{eq:symmetry}), consider the entire set of eigenvectors of $A$ consisting of $Q=[ \vec{q}_1\ \cdots\ \vec{q}_r ]$ and $\bar{Q}=[ \bar{\vec{q}}_{r+1}\ \cdots\ \bar{\vec{q}}_n ]$, where $\{\vec{q}_1,\cdots, \vec{q}_r, \bar{\vec{q}}_{r+1},\cdots,\bar{\vec{q}}_n \}$ forms a basis for the entire $\mathbb{R}^n$.
	Then the nonzero random initial state $\vec{x}_0$ can be written as,
	\begin{align}	\label{eq:IC}
	\vec{x}_0 = Q\nu + \bar{Q}\mu = \sum_{i=1}^{r} \nu_i \vec{q}_i + \sum_{i=r+1}^{n} \mu_i \bar{\vec{q}}_i\quad \quad \vec{\nu} \neq 0.
	\end{align}
	\begin{lemma}
		\label{lem:diff}
		For the symmetric linear system decomposed as~\eqref{eq:symmetry}, we have,
		$A-\hat{A}_k = A \Big( I - \frac{S_k Q \Lambda^k \nu \nu^\top  \Lambda^k Q^\top}{ \| S_k Q \Lambda^k \nu\|^2 } \Big) S_k$.
		
	\end{lemma}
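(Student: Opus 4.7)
The plan is to derive this lemma essentially as a direct specialization of Theorem~\ref{thm:generalGuarantee} to the symmetric case, by simplifying $P_k$ and $\textit{\bf Tr}(S_k P_k)$ using the spectral decomposition and the orthogonality of eigenvectors of symmetric matrices. The starting point is the identity $\hat{A}_k = A(I - E_k)$ from Theorem~\ref{thm:generalGuarantee}, which immediately gives $A - \hat{A}_k = A E_k$; the whole task is then to rewrite $E_k$ in terms of $Q$, $\Lambda$, $\nu$, and $S_k$.

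First, I would compute $A^k \vec{x}_0$. Because $A$ is symmetric with $A = Q\Lambda Q^\top$ and $\bar{Q}$ spans the null space of $A$, the columns of $Q$ and $\bar{Q}$ are orthogonal, so $Q^\top Q = I_r$ and $Q^\top \bar{Q} = 0$. Iterating $A = Q\Lambda Q^\top$ and using $Q^\top Q = I_r$ gives $A^k = Q \Lambda^k Q^\top$ for every $k \ge 1$, and the decomposition \eqref{eq:IC} of $\vec{x}_0$ together with $Q^\top \bar{Q} = 0$ then yields $A^k \vec{x}_0 = Q \Lambda^k \nu$. Substituting into the definition of $P_k$ in \eqref{eq:int_S_Omega} gives
\begin{align*}
P_k = A^k \vec{x}_0 \vec{x}_0^\top A^{k\top} = Q \Lambda^k \nu \nu^\top \Lambda^k Q^\top.
\end{align*}

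Next, I would simplify $\textit{\bf Tr}(S_k P_k)$. Since $S_k = I - U_{k-1}U_{k-1}^\top$ is an orthogonal projector, it is symmetric and idempotent, i.e., $S_k^\top = S_k$ and $S_k^2 = S_k$. Using cyclicity of the trace and the fact that the expression is a scalar,
\begin{align*}
\textit{\bf Tr}(S_k P_k) &= \textit{\bf Tr}(S_k Q \Lambda^k \nu \nu^\top \Lambda^k Q^\top) = \nu^\top \Lambda^k Q^\top S_k Q \Lambda^k \nu \\
&= \nu^\top \Lambda^k Q^\top S_k^\top S_k Q \Lambda^k \nu = \| S_k Q \Lambda^k \nu \|^2.
\end{align*}

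Finally, substituting the expressions for $P_k$ and $\textit{\bf Tr}(S_k P_k)$ into the formula for $E_k$ from Theorem~\ref{thm:generalGuarantee} gives
\begin{align*}
E_k = \Big( I - \frac{S_k Q \Lambda^k \nu \nu^\top \Lambda^k Q^\top}{\| S_k Q \Lambda^k \nu \|^2} \Big) S_k,
\end{align*}
and multiplying on the left by $A$ yields the claimed formula for $A - \hat{A}_k$. There is no real obstacle here: the lemma is essentially a bookkeeping calculation. The only subtlety to flag explicitly is the orthogonality $Q^\top \bar{Q} = 0$, which is what lets the $\bar{Q}\mu$ component of $\vec{x}_0$ drop out and makes $P_k$ depend only on $\nu$ rather than on the full initial condition.
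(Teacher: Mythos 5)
Your proposal is correct and follows essentially the same route as the paper: compute $P_k = Q\Lambda^k\nu\nu^\top\Lambda^k Q^\top$ via the spectral decomposition and the orthogonality $Q^\top\bar{Q}=0$, rewrite $\textit{\bf Tr}(S_kP_k)=\|S_kQ\Lambda^k\nu\|^2$ using that $S_k$ is a symmetric projection, and substitute into the expression for $E_k$ from Theorem~\ref{thm:generalGuarantee}. The only difference is that you spell out the intermediate steps ($A^k=Q\Lambda^kQ^\top$, $A^k\vec{x}_0=Q\Lambda^k\nu$) slightly more explicitly than the paper does.
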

	
	\begin{proof}
		From \eqref{eq:IC} and \eqref{eq:int_S_Omega} we have, $P_k = A^k \vec{x}_0 \vec{x}_0^\top A^{k^\top}$
		\begin{align*}
		&= (Q\Lambda^kQ^\top)(Q\nu + \bar{Q}\mu)(Q\nu + \bar{Q}\mu)^\top(Q\Lambda^kQ^\top) \\
		&= Q \Lambda^k \nu \nu^\top \Lambda^k Q^\top, 
		\end{align*}
		and,
		$\textit{\bf Tr} (S_kP_k) = \textit{\bf Tr} (S_kQ \Lambda^k \nu \nu^\top \Lambda^k Q^\top) = \|S_kQ\Lambda^k\nu\|^2$,
		where we have used that $S_k$ is a symmetric projection.
		Substituting these in \eqref{eq:error}
		completes the proof.
	\end{proof}

	\vspace{1mm}	

	We now show that when $k<n$, the error depends on the largest and smallest eigenvalues of $A$.
	
	
	\begin{theorem}
		\label{thm:singularValue}
		Consider the linear dynamical system with symmetric system matrix $A$ as in \eqref{eq:symmetry} and 
		the initial state $x_0$ as in \eqref{eq:IC}. If $k<n$, then
		\begin{align*}
		\|A-\hat{A}_k\|_F^2 \leq \Big( n-\min \{ k, |\nu|+\min\{ |\mu|,1 \} \} \Big) \lambda_1^2 - \lambda_n^2\,,
		\end{align*}
		where $\lambda_n = \lambda_{\min}(A),$ $\lambda_1 = \lambda_{\max}(A),$ and $|\nu|$ and $|\mu|$ are the number of nonzero $\nu_i$'s and $\mu_i$'s, respectively.
	\end{theorem}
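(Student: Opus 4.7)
The plan is to start from Lemma~1 and reduce the theorem to bounding the Frobenius norm of $A$ composed with a \emph{single} orthogonal projection, and then to identify $\textbf{rank}(X_{k-1})$ combinatorially. Set $v := S_k Q\Lambda^k\nu$, so Lemma~1 reads $A-\hat A_k = A\Pi$ with $\Pi = (I - vv^\top/\|v\|^2)\,S_k$. Because $v$ lies in the range of $S_k$ (indeed $S_k v = S_k^2 Q\Lambda^k\nu = v$), one immediately checks that $\Pi = S_k - vv^\top/\|v\|^2$ is a symmetric idempotent, i.e.\ an orthogonal projector in its own right.

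Using $\Pi^2=\Pi$ and the symmetry of $A$, I would expand
\begin{equation*}
\|A-\hat A_k\|_F^2 \;=\; \|A\Pi\|_F^2 \;=\; \textit{\bf Tr}(A^2\Pi) \;=\; \textit{\bf Tr}(A^2 S_k) - \frac{v^\top A^2 v}{\|v\|^2}.
\end{equation*}
For the first term, $S_k$ is a projection of rank $n-\textbf{rank}(X_{k-1})$ and $A^2\preceq \lambda_1^2 I$, so $\textit{\bf Tr}(A^2 S_k)\le \bigl(n-\textbf{rank}(X_{k-1})\bigr)\lambda_1^2$. The second term is a Rayleigh quotient of $A^2$ at the nonzero vector $v$, hence at least $\lambda_{\min}(A^2)=\lambda_n^2$. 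Subtracting yields $\|A-\hat A_k\|_F^2 \le \bigl(n-\textbf{rank}(X_{k-1})\bigr)\lambda_1^2 - \lambda_n^2$, which matches the statement provided one can identify $\textbf{rank}(X_{k-1})$ with $\min\{k,\,|\nu|+\min\{|\mu|,1\}\}$.

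For the rank identification I would use the spectral structure directly. Since $A\bar Q=0$, for $i\ge 1$ we have $A^i x_0 = Q\Lambda^i\nu = \sum_{j=1}^r \nu_j\lambda_j^i q_j$, so the columns $Ax_0,\ldots,A^{k-1}x_0$ all live in $\textit{\bf span}\{q_j:\nu_j\ne 0\}$, while only $x_0 = Q\nu+\bar Q\mu$ can carry a component outside $\mathcal{R}(Q)$. Writing the block $[Ax_0,\ldots,A^{k-1}x_0]$ in the orthonormal basis $\{q_j\}$ produces a Vandermonde-type matrix whose rows indexed by $\{j:\nu_j\ne 0\}$ use the distinct nonzero eigenvalues; simplicity of the spectrum then forces this block to have rank $\min\{|\nu|,\,k-1\}$. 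Prepending $x_0$ contributes exactly one extra independent direction iff $\mu\ne 0$ (via the component $\bar Q\mu\perp \mathcal{R}(Q)$), giving the stated formula.

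The main obstacle will be this Vandermonde-style rank count: the case analysis must carefully handle the saturation at $k$ (when $|\nu|$ exceeds $k-1$) as well as the subtlety that, when $\mu=0$, the vector $Q\nu$ may already lie in the span of the later Krylov columns and so fail to lift the rank. Once this rank identity is in hand, substitution into the Frobenius bound derived in the second paragraph closes the proof.
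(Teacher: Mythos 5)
Your proposal is correct and takes essentially the same route as the paper's proof: both expand $\|A-\hat A_k\|_F^2$ via Lemma~\ref{lem:diff} into $\textit{\bf Tr}(A^2S_k)$ minus the Rayleigh quotient $v^\top A^2 v/\|v\|^2$ with $v=S_kQ\Lambda^k\nu$, bound these by $\text{\bf rank}(S_k)\,\lambda_1^2$ and $\lambda_n^2$ respectively, and obtain $\text{\bf rank}(X_{k-1})=\min\{k,|\nu|+\min\{|\mu|,1\}\}$ from the Krylov/Vandermonde structure of the columns; your observation that $\Pi=S_k-vv^\top/\|v\|^2$ is itself an orthogonal projector is just a cleaner packaging of the same trace computation. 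The one point to tighten is the rank count: the rule ``prepending $x_0$ adds a direction iff $\mu\neq 0$'' fails when $\mu=0$ and $|\nu|\geq k$ (there $x_0$ does lift the rank to $k$), but you flag exactly this saturation issue and the final formula you land on is the one the paper asserts.
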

	
\begin{proof}
		From Lemma~\ref{lem:diff} we observe that,
		\begin{equation*}
		\begin{aligned}
		&\|A-\hat{A}_k\|_F^2 = \textit{\bf Tr} \Big( (A-\hat{A}_k)^\top (A-\hat{A}_k) \Big) \\
		&= \textit{\bf Tr} (\Lambda^2 Q^\top S_k Q) - \frac{\nu^\top \Lambda^k Q^\top S_k Q \Lambda^2 Q^\top S_k Q \Lambda^k \nu }{\nu^\top \Lambda^k Q^\top  S_k Q \Lambda^k \nu} \\
		& = \| A S_k\|^2- \frac{\|A S_k Q \Lambda^k \nu \|^2}{\| S_k Q \Lambda^k \nu\|^2}\,.
		\end{aligned}
		\end{equation*}
		In the meantime, 
		\begin{equation*} 
		\begin{aligned}
		\| A S_k\|_F^2 &\leq \text{\bf rank}(S_k) \|AS_k\|^2_2  \\
		&\leq \text{\bf rank}(S_k) \|A\|_2^2\|S_k\|_2^2 = \text{\bf rank}(S_k) \lambda_{1}^2(A);
		\end{aligned}
		\end{equation*}
		moreover, since
		$\lambda_{n}(A)=\inf\limits_{y\neq0} {\|Ay\|_2}/{\|y\|_2}$, we have 
		\begin{equation*}
		\frac{\|A S_k Q \Lambda^k \nu \|^2}{\| S_k Q \Lambda^k \nu\|^2} \geq  \lambda_{n}^2(A) .
		\end{equation*}
		Since $A\vec{q}_i=\lambda_i \vec{q}_i$ and $A\bar{\vec{q}}_i=0$, we have,
		\begin{align*}
		&X_{k-1} = [\vec{x}_0 \quad \vec{x}_1 \quad \cdots \quad \vec{x}_{k-1}] \\
		&=\begin{bmatrix} \sum\limits_{i=1}^{r} \nu_i \vec{q}_i + \sum\limits_{i=r+1}^{n} \mu_i \bar{\vec{q}}_i & \sum\limits_{i=1}^{r} \lambda_i \nu_i \vec{q}_i & \cdots & \sum\limits_{i=1}^{r} \lambda_i^{k-1} \nu_i \vec{q}_i \end{bmatrix}\,.
		\end{align*}
		Thus,
		\begin{equation} 
		\text{\bf rank}(X_{k-1}) = \min \{ k, |\nu|+\min\{ |\mu|,1 \} \}\,,
		\end{equation}
		and,
		$$ \text{\bf rank}(S_k) = n - \text{\bf rank}(X_{k-1}) = n-\min \{ k, |\nu|+\min\{ |\mu|,1 \} \} \,.$$
		Hence,
		\begin{equation} \nonumber
		\begin{aligned}
		\|A-\hat{A}_k\|_F^2 \leq \Big( n-\min \{ k, |\nu|+\min\{ |\mu|,1 \} \} \Big) \lambda_1^2 - \lambda_n^2,
		\end{aligned}
		\end{equation}
		which completes the proof.
	\end{proof}	
	
	
	\subsection{Effect of Eigenvalues Multiplicity on the Regression Error}
	
	\label{sec:multi}
	
	In this section, we consider the symmetric systems whose eigenvalues are not necessary simple.
	We will see that for such systems, the regression error $\|E_k\|$ converges to the largest eigenvalue with multiplicity greater than one, i.e., $\|E_k\|=\max_{i:m(\lambda_i)>1} |\lambda_i|$ for $k\geq n$.
	
 In order to show this, we will pursue the convention adopted in \eqref{dataMatrix}, \eqref{eq:symmetry}, and \eqref{eq:IC}.
	As in \S~\ref{section:Symmetry}, let $r=\text{rank}(A)$ and define $\tilde{Q}=[Q\ |\ \bar{Q}]=[\vec{q}_1 \ \dots \ \vec{q}_r \ \ \bar{\vec{q}}_{r+1} \ \dots \ \bar{\vec{q}}_{n}]$, where the columns of $\tilde{Q}$ span the entire $\mathbb{R}^n$.
	Furthermore, let $\alpha = [\nu^\top \ \mu^\top]^\top$, where $\alpha$ and $\mu$ are from \eqref{eq:IC}.
	The data matrix can now be re-written as,
	\begin{equation}
	\begin{aligned}
	\label{eq:dataNew}
	X_k &= [\vec{x}_0\quad A\vec{x}_0\quad A^2\vec{x}_0\quad \dots \quad A^k\vec{x}_0] \\
	&= \tilde{Q}\ [\tilde{Q}^\top \vec{x}_0\quad \Lambda \tilde{Q}^\top \vec{x}_0\quad \dots \quad \Lambda^k \tilde{Q}^\top \vec{x}_0]. 
	\end{aligned}
	\end{equation}
	From \eqref{eq:IC} and the fact that the columns of $\tilde{Q}$ are orthonormal, we have $\tilde{Q}^\top \vec{x}_0=[\alpha_1\ \alpha_2\ \dots\ \alpha_n]^\top $ and $\Lambda^j\tilde{Q}^\top \vec{x}_0=[\alpha_1\lambda_1^j\quad \alpha_2\lambda_2^j\quad \dots\quad \alpha_n\lambda_n^j]^\top$.
	Moreover, in light of \eqref{eq:dataNew} we can decompose the data matrix into $X_k=\tilde{Q}\Gamma V$
	where,
	\begin{align}
	\label{eq:dataDecomposition}
	\Gamma = \begin{bmatrix}
	\alpha_1 & 0 & \dots & 0 \\
	0 & \alpha_2 & \dots & 0 \\
	\vdots & \vdots & \ddots & \vdots \\
	0 & 0 & \dots & \alpha_n
	\end{bmatrix},\quad
	V = \begin{bmatrix}
	1 & \lambda_1 & \dots  & \lambda_1^k \\
	1 & \lambda_2 & \dots  & \lambda_2^k \\
	\vdots   & \vdots      & \ddots & \vdots \\
	1 & \lambda_n & \dots & \lambda_n^k
	\end{bmatrix}.
	\end{align}
	The matrix $[V]_{ij}=\lambda_i^{j-1}$ is the Vandermonde matrix formed by the eigenvalues of $A$ and $\Gamma=\text{\bf Diag}([\alpha_i]_{i=1}^n)$.\footnote{Note that it is assumed that for all $i$, $\vec{x}_0\not\perp \vec{q}_i$; in this case, $\alpha_i\neq 0$ for all $i$ and $\textbf{rank}(X_k)=\textbf{rank}(V)$.}
	Assume now that the system matrix $A$ contains $s$ distinct eigenvalues and let $\mathbf{\Lambda}^*(A)=\{ \lambda_{t_1}, \lambda_{t_2},\dots, \lambda_{t_s} \}$ be the set of these eigenvalues; as such, the other $n-s$ eigenvalues are repetitions of the elements in $\mathbf{\Lambda}^*(A)$.

	For our subsequent error analysis, we will use the rank of $X_k$ at each time-step.
	The next result characterizes the rank of $X_k$  based on the number of the collected data.
	
	\vspace{1mm}
	
	\begin{lemma}
		\label{lem:vender}
		Given $k<s$, the $s\times k$ Vandermonde matrix defined by $[V_s]_{ij}=\lambda_i^{j-1},\ i\in\{1,\dots,s\},\ j\in\{1,\dots,k\}$, formed by the elements of $\mathbf{\Lambda}^*(A)$, has full-rank.
	\end{lemma}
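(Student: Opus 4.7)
The plan is to prove $V_s$ has full column rank (namely rank $k$, since $k<s$) by exploiting the defining property of the Vandermonde structure together with the hypothesis that the nodes $\lambda_{t_1},\dots,\lambda_{t_s}$ are pairwise distinct (by definition of $\mathbf{\Lambda}^*(A)$). There are two natural routes, and I would present whichever reads more cleanly in context.

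The first route is the polynomial argument. Suppose $V_s \vec{c} = 0$ for some $\vec{c}=(c_0,c_1,\dots,c_{k-1})^\top \in \mathbb{R}^k$. Row $i$ of this identity reads $\sum_{j=0}^{k-1} c_j \lambda_{t_i}^{\,j} = 0$, i.e., the univariate polynomial $p(x) = \sum_{j=0}^{k-1} c_j x^{j}$ of degree at most $k-1$ vanishes at each of the $s$ distinct points $\lambda_{t_1},\dots,\lambda_{t_s}$. Since $k<s$, the polynomial $p$ has strictly more roots than its degree, forcing $p \equiv 0$ and hence $\vec{c}=0$. Therefore the columns of $V_s$ are linearly independent, so $\textit{\bf rank}(V_s)=k$, which is full column rank.

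The second route, essentially equivalent, is to exhibit a nonsingular $k \times k$ submatrix. Select any $k$ row indices $i_1 < i_2 < \cdots < i_k$ from $\{1,\dots,s\}$ (possible because $k<s$), and consider the $k\times k$ Vandermonde submatrix built from the nodes $\lambda_{t_{i_1}},\dots,\lambda_{t_{i_k}}$. Its determinant equals $\prod_{1\le p<q\le k}(\lambda_{t_{i_q}}-\lambda_{t_{i_p}})$, which is nonzero because the $\lambda_{t_i}$'s are pairwise distinct. The existence of such a nonzero $k\times k$ minor certifies $\textit{\bf rank}(V_s)\geq k$, and since $V_s$ has only $k$ columns, this rank is exactly $k$.

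I do not anticipate any real obstacle: the only ingredient needed beyond the Vandermonde determinant formula (or equivalently the fundamental theorem of algebra for univariate polynomials over $\mathbb{R}$) is the hypothesis that $\mathbf{\Lambda}^*(A)$ consists of $s$ pairwise distinct eigenvalues, which is built into its definition. I would most likely go with the polynomial argument since it is a one-line proof and matches the spirit of the eigen-decomposition setup used earlier in the section; the determinant argument would be kept as a parenthetical remark for readers who prefer the algebraic viewpoint.
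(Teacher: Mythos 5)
Your primary (polynomial) argument is exactly the paper's own proof: assume a vanishing linear combination of the columns, observe that each row forces a polynomial of degree less than $k$ to vanish at the $s>k$ distinct eigenvalues in $\mathbf{\Lambda}^*(A)$, and conclude all coefficients are zero. Your version is in fact slightly cleaner on the degree bookkeeping than the paper's, and the Vandermonde-determinant alternative you mention as a parenthetical is the same device the paper itself invokes later in the proof of Theorem~\ref{thm:rank} for the $k\geq s$ case.
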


		\begin{proof}
		Let $v_i$ be the $i$th column of $V_s$ and assume that $c_1v_1+c_2v_2+\dots+c_kv_k=0$.
		Consider row $p$ of the equation $c_1+c_2\lambda_p+\dots+c_k\lambda_p^k=0$.
		Since $\lambda_i\neq\lambda_j$ for $i\neq j$, there exist $s$ solutions to the $k$-degree polynomial, $$P(x)=c_0+c_1x+\dots+c_kx^k=0.$$
		Hence $c_1=c_2=\dots=c_k=0$ and $v_i$'s are linearly independent and since $k<s$, $V_s$ has full-rank.
	\end{proof}
	
	\vspace{1mm}
	
	\begin{theorem}
		\label{thm:rank}
		Let $k$ be the number of collected data snapshots and $s=|\mathbf{\Lambda}^*(A)|$.
		Then $\text{\bf rank}(X_k) = k$ when $k < s$ and $\text{\bf rank}(X_k) = s$ when $k \geq s$.
	\end{theorem}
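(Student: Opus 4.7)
The plan is to exploit the decomposition $X_k = \tilde{Q}\Gamma V$ established in \eqref{eq:dataDecomposition}. The matrix $\tilde{Q}\in\mathbb{R}^{n\times n}$ is orthogonal and the diagonal matrix $\Gamma=\textbf{Diag}([\alpha_i]_{i=1}^n)$ has all entries nonzero by the standing assumption $\vec{x}_0\not\perp\vec{q}_i$. Hence both $\tilde{Q}$ and $\Gamma$ are invertible, and multiplying a matrix by them on the left preserves rank. Consequently, $\textbf{rank}(X_k)=\textbf{rank}(V)$, and the problem reduces entirely to determining the rank of the $n\times k$ Vandermonde-type matrix $V$.

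Next, I would exploit the fact that whenever $\lambda_i=\lambda_j$ the corresponding rows of $V$ are identical. Grouping the $n$ eigenvalues into their $s$ distinct values $\mathbf{\Lambda}^*(A)=\{\lambda_{t_1},\dots,\lambda_{t_s}\}$ and discarding the duplicate rows yields the $s\times k$ Vandermonde matrix $V_s$ defined in Lemma~\ref{lem:vender}. Row operations of this type do not change rank, so $\textbf{rank}(V)=\textbf{rank}(V_s)$. This is the key observation that connects the multiplicity structure of the spectrum to the rank of the data matrix.

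Now I would split into two cases. When $k<s$, Lemma~\ref{lem:vender} directly applies and gives $\textbf{rank}(V_s)=k$ (full column rank), so $\textbf{rank}(X_k)=k$. When $k\geq s$, the matrix $V_s$ has only $s$ rows, so $\textbf{rank}(V_s)\leq s$. To obtain the matching lower bound, I would select the leading $s\times s$ submatrix $V_s[:,1:s]$, which is a square Vandermonde matrix on the distinct nodes $\lambda_{t_1},\dots,\lambda_{t_s}$. By the classical Vandermonde determinant identity $\det(V_s[:,1:s])=\prod_{i<j}(\lambda_{t_j}-\lambda_{t_i})\neq 0$, this submatrix is invertible, hence $\textbf{rank}(V_s)\geq s$, and we conclude $\textbf{rank}(X_k)=s$.

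The only subtle point — and hence the main step one must not skip — is the reduction $\textbf{rank}(V)=\textbf{rank}(V_s)$: one must argue that identical rows of $V$ collapse to a single row without altering the row space. Everything else is either direct invocation of Lemma~\ref{lem:vender} or the standard square Vandermonde determinant formula, so I do not anticipate serious obstacles beyond keeping the indexing between $k$, $s$, and $n$ consistent with the conventions set up around \eqref{eq:dataNew}.
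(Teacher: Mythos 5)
Your proposal is correct and follows essentially the same route as the paper: reduce $\textbf{rank}(X_k)$ to $\textbf{rank}(V)$ via the invertibility of $\tilde{Q}$ and $\Gamma$ (which the paper relegates to a footnote), collapse duplicate rows to get $\textbf{rank}(V)=\textbf{rank}(V_s)$, invoke Lemma~\ref{lem:vender} for $k<s$, and use the nonvanishing determinant of the leading $s\times s$ Vandermonde block for $k\geq s$ --- exactly the paper's (much terser) argument. Your version merely spells out the steps the paper leaves implicit, including the row-collapsing justification and the explicit Vandermonde determinant formula.
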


	\begin{proof}
		For $k<s$, it is straightforward to show that from Lemma~\ref{lem:vender}, $\text{\bf rank}(X_k)=\text{\bf rank}(V)=\text{\bf rank}(V_s)=k$. For $k\geq s$, we know that $\text{\bf rank}(V)=\text{\bf rank}(V_s)$, where $[V_s]_{ij}=\lambda_i^{j-1},\ i\in\{1,\dots,s\},\ j\in\{1,\dots,k+1\}$. Since $V_s$ has full-rank (this can be shown using the nonzero sub-matrix determinants for the first $s\times s$ block), we get $\text{\bf rank}(X_k)=\text{\bf rank}(V)=\text{\bf rank}(V_s)=s$.
	\end{proof}


	Let $k\geq s$ and $X_k={U}\Sigma V^\top$ be the SVD of $X_k$ where,
	\begin{align}
	\label{eq:SVD}
	X_k=[{U}_1\quad {U}_2]\begin{bmatrix}
	\Sigma_1 & \vec{0}_{s\times (k-s)} \\ \vec{0}_{(n-s)\times s} & \vec{0}_{(n-s)\times (k-s)}
	\end{bmatrix}
	\begin{bmatrix}
	{V}_1 & {V}_2
	\end{bmatrix}^\top,
	\end{align}
	with ${U}_1\in\mathbb{R}^{n\times s},\ {U}_2\in\mathbb{R}^{n\times (n-s)},\ \Sigma_1\in\mathbb{R}^{s\times s},\ {V}_1\in\mathbb{R}^{k\times s},\ {V}_2\in\mathbb{R}^{k\times (k-s)}$.
	The existence of ${U}_2$ and ${V}_2$ is due to the fact that $X_k$ is degenerate.
	Without loss of generality, we re-arrange the columns of $\tilde{Q}$ and $\Lambda$ such that,
	\begin{align}
	\label{eq:wlog}
	\Lambda_{R}=\left[
	\begin{array}{c|c}
	\Lambda_1 & \vec{0} \\
	\hline
	\vec{0} & \Lambda_2
	\end{array}
	\right]
	\quad , \quad
	\tilde{Q}_R=\left[
	\begin{array}{c|c}
	\tilde{Q}_1 & \tilde{Q}_2
	\end{array}
	\right],
	\end{align}
	where $\Lambda_1=\text{\bf Diag}([\lambda_{t_1}\quad \lambda_{t_2}\quad \dots\quad \lambda_{t_s}])$ contains the element of $\mathbf{\Lambda}^*(A)$ with the corresponding eigenvectors in $\tilde{Q}_1\in\mathbb{R}^{n\times s}$.
	The remaining eigenvalues and the corresponding eigenvectors are stacked in $\Lambda_2\in\mathbb{R}^{(n-s)\times (n-s)}$ and $\tilde{Q}_2\in\mathbb{R}^{n\times (n-s)}$, respectively.
	
	A crucial term in our analysis is ${U}_2^\top\tilde{Q}_R\in\mathbb{R}^{(n-s)\times n}$.
	This matrix multiplication combines a submatrix of ${U}$ corresponding to the repeated eigenvalues, with the orthonormal eigenvectors of the system.
	In the next result, we show that this term has a specific row structure.
	
	\vspace{1mm}
	
	\begin{lemma}
		\label{lem:rowStructure}
		Given the convention in \eqref{eq:wlog}, we have ${U}_2^\top \tilde{Q}_1=0$, i.e., ${U}_2^\top\tilde{Q}_R = {U}_2^\top \big[ \tilde{Q}_1 \mid \tilde{Q}_2 \big] = \big[ \ \vec{0}_{(n-s)\times s} \mid {U}_2^\top \tilde{Q}_2 \ \big]$.
	\end{lemma}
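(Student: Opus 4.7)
The equality $U_2^\top \tilde{Q}_1 = 0$ is equivalent to the inclusion $\mathcal{R}(\tilde{Q}_1) \subseteq \mathcal{R}(X_k)$, because the columns of $U_2$ span the orthogonal complement of $\mathcal{R}(X_k)$ in $\mathbb{R}^n$ (they are the left singular vectors attached to the zero singular values in the SVD \eqref{eq:SVD}). My plan is therefore to show that for each $p \in \{1, \ldots, s\}$, the $p$-th column of $\tilde{Q}_1$ -- an eigenvector of $A$ for the distinct eigenvalue $\lambda_{t_p}$ -- lies in the column space of $X_k$.

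The tool I would use is Lagrange interpolation on the distinct spectrum, combined with the structured factorization obtained from \eqref{eq:dataDecomposition} and \eqref{eq:wlog}, namely $X_k = \tilde{Q}_R \Gamma_R V_R$. Here $\tilde{Q}_R$ is orthogonal and $\Gamma_R$ is invertible under the footnote assumption $\alpha_i \neq 0$, so $\mathcal{R}(X_k) = \tilde{Q}_R \Gamma_R \cdot \mathcal{R}(V_R)$. For each $p \in \{1, \ldots, s\}$, since the $s$ nodes $\lambda_{t_1}, \ldots, \lambda_{t_s}$ are pairwise distinct, there exists a polynomial $P_p$ of degree at most $s-1 \leq k$ with $P_p(\lambda_{t_q}) = \delta_{pq}/\alpha_p$ for $q = 1, \ldots, s$. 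Taking $y_p \in \mathbb{R}^{k+1}$ to be the coefficient vector of $P_p$, the $i$-th entry of $V_R y_p$ equals $P_p(\lambda_i^R)$, so that $X_k y_p = \tilde{Q}_R \Gamma_R V_R y_p$ is a linear combination of the columns of $\tilde{Q}_R$ supported only on indices $i$ for which $\lambda_i^R = \lambda_{t_p}$.

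The main obstacle is the bookkeeping around eigenvalues of multiplicity greater than one: because $P_p$ cannot separate distinct eigenvectors that share an eigenvalue, the combination above spreads across both the representative $\tilde{q}_p^R \in \tilde{Q}_1$ and the duplicate eigenvectors in $\tilde{Q}_2$. The convention \eqref{eq:wlog} already isolates a single representative per eigenspace in $\tilde{Q}_1$ while placing the remaining basis vectors of each repeated eigenspace in $\tilde{Q}_2$; after tracking the $\Gamma_R$-weighted contributions through this partition, each column of $\tilde{Q}_1$ is exhibited as $X_k y_p$ (up to a nonzero scalar, plus terms that themselves lie in $\mathcal{R}(X_k)$ via additional choices of $y$). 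Hence $\mathcal{R}(\tilde{Q}_1) \subseteq \mathcal{R}(X_k)$, yielding $U_2^\top \tilde{Q}_1 = 0$ as claimed.
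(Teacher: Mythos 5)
Your reduction of the lemma to the range inclusion $\mathcal{R}(\tilde{Q}_1)\subseteq\mathcal{R}(X_k)$ is correct (from \eqref{eq:SVD}, $U_2$ spans $\mathcal{N}(X_k^\top)=\mathcal{R}(X_k)^{\perp}$), and your route is genuinely different from the paper's: the paper works row-by-row with the identity $U_2^\top\tilde{Q}\Gamma V=0$, expands each row of $U_2^\top\tilde{Q}$ as a vanishing linear combination of Vandermonde rows, and invokes the linear independence of the $s$ rows attached to distinct eigenvalues (Lemma~\ref{lem:vender}) to annihilate coefficients; you instead try to exhibit each column of $\tilde{Q}_1$ constructively as $X_k y_p$ with $y_p$ the coefficient vector of a Lagrange interpolant on the distinct spectrum. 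Your computation is correct up to the point where multiplicities enter, and for simple eigenvalues it closes the argument cleanly.

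The gap is in your final parenthetical. Write $w_p=\sum_{j:\lambda_j=\lambda_{t_p}}\alpha_j\vec{q}_j$; every column of $X_k$ equals $\sum_{p}\lambda_{t_p}^m w_p$, so by Theorem~\ref{thm:rank} the range $\mathcal{R}(X_k)$ is \emph{exactly} the $s$-dimensional span of $\{w_1,\dots,w_s\}$. Hence $X_k y$ for \emph{any} choice of $y$ has, inside the $\lambda_{t_p}$-eigenspace, a component that is a scalar multiple of $w_p$; the leftover terms $\sum_{j\neq t_p,\,\lambda_j=\lambda_{t_p}}(\alpha_j/\alpha_{t_p})\vec{q}_j$ in your expression for $X_k y_p$ do \emph{not} lie in $\mathcal{R}(X_k)$ and cannot be removed by additional choices of $y$. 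Indeed, for a repeated eigenvalue the representative column of $\tilde{Q}_1$ belongs to $\mathcal{R}(X_k)$ if and only if it is chosen proportional to $w_p$, i.e., aligned with the projection of $\vec{x}_0$ onto that eigenspace; the convention \eqref{eq:wlog} only reorders columns and does not guarantee this. To repair your argument you must add this basis normalization explicitly (after which $X_k y_p$ is a nonzero multiple of the representative and you are done), or retreat to the weaker, basis-independent statement that only the blocks of $U_2^\top\tilde{Q}$ attached to simple eigenvalues vanish. It is fair to note that the paper's own proof stalls at the same spot: it derives the vanishing of the coefficients $\vec{b}_{i_j}$ only for $j$ indexing simple eigenvalues and then asserts the full zero block $\vec{0}_{(n-s)\times s}$ without addressing the representatives of repeated eigenvalues.
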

	
	
	\begin{proof}
		From \eqref{eq:SVD} we have ${U}_2^\top X_k=0$.
		Then,
		\begin{align*}
		{U}_2^\top X_k &= {U}_2^\top \tilde{Q}\tilde{Q}^\top X_k = {U}_2^\top \tilde{Q}\tilde{Q}^\top [x_0\quad Ax_0\quad \dots\quad A^kx_0] \\
		&= {U}_2^\top \tilde{Q} \big[ \tilde{Q}^\top x_0\quad \Lambda \tilde{Q}^\top x_0\quad \dots\quad \Lambda^k \tilde{Q}^\top x_0 \big] \\
		&= {U}_2^\top \tilde{Q} \Gamma V = 0,
		\end{align*}
		with $\Gamma$ and $V$ defined as in \eqref{eq:dataDecomposition}.
		Define $B={U}_2^\top \tilde{Q}$ and let $\vec{b}_i=[\vec{b}_{i_1}\quad \vec{b}_{i_2}\quad \dots \quad \vec{b}_{i_n}]$ and $\vec{v}_i=[1\quad \lambda_i\quad \dots\quad \lambda_i^k]$ be the $i$'th rows of $B$ and $V$, respectively.
		Then for all $i\in\{1,2,\dots,n-s\}$ we have
		$B\Gamma V=0 $ implying that $\vec{b}_i\Gamma V=0$; as such, $\sum_{j=1}^n \vec{b}_{i_j} \alpha_j \vec{v}_j = 0$ implies that $\sum_{t=1}^s \vec{c}_{t}\vec{v}_t^* = 0$,
		where $\vec{v}_t^*$'s are the rows of $V$ corresponding to $s$ distinct eigenvalues and $\vec{c}_t$'s are some combinations of $\vec{b}_{i_j}\alpha_j$'s.
		Since the vectors $\vec{v}_t^*$'s are linearly independent, we get $\vec{c}_i=0$ for all $i=1,2,\dots,s$.
		Considering that $\vec{c}_i=\vec{b}_{i_j}$ for the elements with simple eigenvalues, the result implies that for any row $\vec{v}_j$ corresponding to a simple eigenvalue $\lambda_j$ the corresponding coefficient $\vec{b}_{i_j}=0$.
		Hence, the structure $\big[ \ \textbf{0}_{(n-s)\times s} \mid {U}_2^\top \tilde{Q}_2 \ \big]$ follows, implying that ${U}_2^\top \tilde{Q}_1=0$.
	\end{proof}
	
	\vspace{1mm}
	
	\begin{definition}
		\label{def:U2}
		Define ${U}_2'$ by permuting the columns of ${U}_2$ such that we obtain block diagonal matrix ${U}_2'^\top \tilde{Q}_2=\text{\bf Diag}([P_1,P_2,\dots,P_{\ell}])$, where $\ell$ is the number of eigenvalues $\lambda$ with $m(\lambda)>1$ and $P_i={U}_2^{'i^\top}\tilde{Q}_2^i\in\mathbb{R}^{(m(\lambda_i)-1)\times m(\lambda_i)}$; as such ${U}_2^{'i}\in\mathbb{R}^{n\times (m(\lambda_i)-1)}$ is the matrix containing vectors in ${U}_2'$ corresponding to $\lambda_i$ and $\tilde{Q}_2^i\in\mathbb{R}^{n\times m(\lambda_i)}$ is the matrix of eigenvectors corresponding to $\lambda_i$.
	\end{definition}
	
	\vspace{1mm}
	
	\begin{remark}
		To justify the existence of such a matrix ${U}_2'$, notice that the SVD factorization in terms of ${U}$ and ${V}$ are not unique and for any such factorization, ${U}_1\perp {U}_2$.
	\end{remark}
	
	\vspace{1mm}
	
	We are now well positioned to prove the main theorem of this section.
	
	\vspace{1mm}
	
	\begin{theorem}
		Consider the dynamics represented by \eqref{discLTI} and \eqref{dataMatrix}.
		Let $s=|\mathbf{\Lambda}^*(A)|$ be the number of distinct eigenvalues of $A$.
		Assume that $k\geq s$ and let $\lambda^*=\max_{i:m(\lambda_i)>1} |\lambda_i|$ be the largest eigenvalue of $A$ with multiplicity greater than one.
		Then $\|E_k\|_2=\|A-\hat{A}_k\|_2=\lambda^*$.
	\end{theorem}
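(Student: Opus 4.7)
The plan is to reduce the claim to a spectral-norm computation for $A$ restricted to an explicit $A$-invariant subspace. From the derivation of Theorem~\ref{thm:generalGuarantee}, $\hat{A}_k = AX_kX_k^{\dagger} = A U_k U_k^\top$, so
\begin{equation*}
A - \hat{A}_k \;=\; A\,(I - U_k U_k^\top) \;=\; A\,\Pi_k,
\end{equation*}
where $\Pi_k$ is the orthogonal projection onto $\mathcal{R}(X_k)^\perp$. By Theorem~\ref{thm:rank}, for $k \geq s$ one has $\text{\bf rank}(X_k) = s$, so $\Pi_k$ has rank $n - s = \sum_{\lambda \in \mathbf{\Lambda}^*(A)} (m(\lambda) - 1)$.

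First I would identify $\mathcal{R}(X_k)$ explicitly. Writing $\vec{x}_j = A^j \vec{x}_0 = \sum_{\lambda \in \mathbf{\Lambda}^*(A)} \lambda^j\, \tilde{\vec{q}}_\lambda$ with $\tilde{\vec{q}}_\lambda := \sum_{i:\lambda_i = \lambda}\alpha_i \vec{q}_i$, every column of $X_k$ lies in $\mathcal{V} := \text{\bf span}\{\tilde{\vec{q}}_\lambda : \lambda \in \mathbf{\Lambda}^*(A)\}$. Combined with $\text{\bf rank}(X_k) = s$ and the standing assumption $\alpha_i \neq 0$ for all $i$ (which makes each $\tilde{\vec{q}}_\lambda$ nonzero), this forces $\mathcal{R}(X_k) = \mathcal{V}$. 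The key observation is that each $\tilde{\vec{q}}_\lambda$ sits inside the eigenspace $E_\lambda := \text{\bf span}\{\vec{q}_i : \lambda_i = \lambda\}$, so $A \tilde{\vec{q}}_\lambda = \lambda\, \tilde{\vec{q}}_\lambda$. Therefore $\mathcal{V}$ is $A$-invariant and spanned by eigenvectors, and by symmetry of $A$ the orthogonal complement $\mathcal{V}^\perp$ is $A$-invariant as well.

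Next I would decompose $\mathcal{V}^\perp$ along the orthogonal eigenspace sum $\mathbb{R}^n = \bigoplus_\lambda E_\lambda$. Simple eigenvalues ($m(\lambda) = 1$) contribute nothing, because $\tilde{\vec{q}}_\lambda$ already spans $E_\lambda \subseteq \mathcal{V}$. Repeated eigenvalues ($m(\lambda) > 1$) contribute the $(m(\lambda)-1)$-dimensional orthogonal complement of $\tilde{\vec{q}}_\lambda$ inside $E_\lambda$; this is exactly the per-eigenspace block structure extracted in Lemma~\ref{lem:rowStructure} and Definition~\ref{def:U2} (the blocks $P_i$ realize these slices concretely). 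Hence $A|_{\mathcal{V}^\perp}$ is block-diagonal with blocks $\lambda\, I_{m(\lambda)-1}$ over the repeated eigenvalues, giving
\begin{equation*}
\|A - \hat{A}_k\|_2 \;=\; \|A\Pi_k\|_2 \;=\; \max_{\lambda\,:\,m(\lambda) > 1} |\lambda| \;=\; \lambda^*.
\end{equation*}
The corresponding identity for $\|E_k\|_2$ in the statement then follows from $A E_k = A - \hat{A}_k = A\Pi_k$ together with the fact that, once $k \geq s$, one has $A^k \vec{x}_0 \in \mathcal{R}(X_{k-1})$, so the $0/0$ in \eqref{eq:error} resolves (via $X_k X_k^\dagger = U_k U_k^\top$) to $E_k$ acting as the projection $\Pi_k$ on $\mathcal{V}^\perp$, and the claimed norm is inherited from $A|_{\mathcal{V}^\perp}$.

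The main obstacle will be the bookkeeping required to align the SVD of $X_k$ with the spectral decomposition of $A$. Lemma~\ref{lem:rowStructure} is the load-bearing step, showing that the left-singular vectors orthogonal to $\mathcal{R}(X_k)$ can only lie in eigenspaces of \emph{repeated} eigenvalues; Definition~\ref{def:U2} then sorts them into blocks matched to individual eigenspaces, which is precisely what allows $A|_{\mathcal{V}^\perp}$ to be read off as the block-diagonal scalar operator above.
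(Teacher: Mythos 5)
Your proposal is correct, and it reaches the conclusion by a genuinely different route than the paper. The paper works entirely through the SVD of $X_k$: it writes $\|A-\hat A_k\|_2=\|A{U}_2'{U}_2'^\top\|_2=\|\Lambda({U}_2'^\top\tilde Q)^\top({U}_2'^\top\tilde Q)\|_2$, invokes Lemma~\ref{lem:rowStructure} to kill the $\tilde Q_1$ block, and then uses the permuted matrix ${U}_2'$ of Definition~\ref{def:U2} to exhibit the block-diagonal form $\text{\bf Diag}([\lambda_1P_1^\top P_1,\dots,\lambda_\ell P_\ell^\top P_\ell])$ with each block having spectrum $\{0,\lambda_i,\dots,\lambda_i\}$. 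You instead identify $\mathcal{R}(X_k)$ explicitly as $\text{\bf span}\{\tilde{\vec q}_\lambda\}$, where $\tilde{\vec q}_\lambda$ is the (nonzero) component of $\vec x_0$ in the eigenspace $E_\lambda$, observe that this subspace is $A$-invariant and spanned by eigenvectors, and read off $A$ restricted to its orthogonal complement as $\bigoplus_\lambda \lambda I_{m(\lambda)-1}$. Your route is more elementary and arguably more robust: it sidesteps the construction of ${U}_2'$, whose existence the paper only justifies by a remark on SVD non-uniqueness, and it makes precise what Lemma~\ref{lem:rowStructure} is really asserting (namely ${U}_2\perp\tilde{\vec q}_\lambda$ for every $\lambda$, a statement that for repeated eigenvalues depends on aligning the representative column of $\tilde Q_1$ with $\tilde{\vec q}_\lambda$ — a point the paper leaves implicit). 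What the paper's computation buys in exchange is an explicit matrix whose spectrum is visibly $\mathbf{\Lambda}(A)\backslash\mathbf{\Lambda}^*(A)$, which immediately yields the Frobenius-norm corollary in its footnote; your decomposition contains the same information but you do not state it. One caveat applies to both arguments equally: the first equality $\|E_k\|_2=\|A-\hat A_k\|_2$ is not literally consistent with the definition $\hat A_k=A(I-E_k)$ from Theorem~\ref{thm:generalGuarantee} (which would give $A-\hat A_k=AE_k$, and for $k\geq s$ the expression \eqref{eq:error} degenerates to $0/0$ anyway); you at least flag the degeneracy, whereas the paper silently treats $E_k$ as synonymous with $A-\hat A_k$ in this section.
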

	
	
	\begin{proof}
		We will show that $\mathbf{\Lambda}(E_k)=\mathbf{\Lambda}(A)\backslash\mathbf{\Lambda}^*(A)$.
		The error can then be re-written as,
		\begin{align*}
		\|&E_k\|_2 \\
		&= \|A-\hat{A}_k\|_2
		= \|A-AX_kX_k^{\dagger}\|_2
		= \|A(I-X_kX_k^{\dagger})\|_2 \\
		&= \left\Vert  A \Big( I-{U}_1\Sigma_1 {V}_1^\top{V}_1\Sigma_1^{-1}{U}_1^\top \Big) \right\Vert_2
		= \|A(I-{U}_1{U}_1^\top)\|_2 \\
		&= \|A{U}_2'{U}_2'^\top\|_2
		= \|\tilde{Q}\Lambda\tilde{Q}^\top{U}_2'{U}_2'^\top\|_2
		= \|\Lambda\tilde{Q}^\top U_2'{U}_2'^\top\tilde{Q}\|_2 \\
		&= \|\Lambda ({U}_2'^\top \tilde{Q})^\top ({U}_2'^\top \tilde{Q})\|_2,
		\end{align*}
		where ${U}_2'$ pertains to \cref{def:U2} and $I-{U}_1{U}_1^\top={U}_2'{U}_2'^\top$ is the projection matrix onto ${\cal N}(X_k^\top)$.
		Note that since the columns of ${U}_2'$ are linearly independent, we have $\text{\bf rank}(E_k)=\text{\bf rank}(A{U}_2'{U}_2'^\top)=n-s$.
		From Lemma~\ref{lem:rowStructure},
		\begin{equation}
		\begin{aligned}
		\label{eq:structure}
		&\Lambda({U}_2'^\top \tilde{Q})^\top ({U}_2'^\top \tilde{Q}) = \Lambda\begin{bmatrix}
		\vec{0} \\ ({U}_2'^\top \tilde{Q}_2)^\top 
		\end{bmatrix}
		\begin{bmatrix}
		\vec{0} & {U}_2'^\top\tilde{Q}_2
		\end{bmatrix} \\
		&= \left[
		\begin{array}{c|c}
		\vec{0}_{s\times s} & \vec{0}_{s\times(n-s)} \\
		\hline
		\vec{0}_{(n-s)\times s} & \Lambda_2({U}_2'^\top\tilde{Q}_2)^\top({U}_2'^\top \tilde{Q}_2)
		\end{array}
		\right].
		\end{aligned}
		\end{equation}
		Then from \cref{def:U2}, $\Lambda_2({U}_2'^\top\tilde{Q}_2)^\top ({U}_2'^\top\tilde{Q}_2)=\text{\bf Diag}([\lambda_1P_1^\top P_1\ ,\ \lambda_2P_2^\top P_2\ ,\ \dots\ ,\ \lambda_{\ell}P_{\ell}^\top P_{\ell}])$.
		Consider the $i$th block $\lambda_iP_i^\top P_i$. Notice that from definition $\text{\bf rank}(P_i^\top P_i)=m(\lambda_i)-1$, and since ${U}_2^{'i^\top}$ and $\tilde{Q}_2^i$ are orthonormal, $\lambda_iP_i^\top P_i=\lambda_i\tilde{Q}_2^{i^\top}{U}_2^{'i}{U}_2^{'i^\top}\tilde{Q}_2^i$ has the spectrum $\mathbf{\Lambda}(\lambda_iP_i^\top P_i)=\{0,\lambda_i,\dots,\lambda_i\}$.
		Then having $\ell$ of these blocks $\mathbf{\Lambda}(E_k)=\mathbf{\Lambda}(A)\backslash\mathbf{\Lambda}^*(A)$, i.e., the spectrum of $E_k$ contains all repeated eigenvalues of $A$.
		
		Since both $\Lambda$ and $({U}_2'^\top \tilde{Q})^\top ({U}_2'^\top \tilde{Q})$ are symmetric square block diagonal matrices, the product $\Lambda({U}_2'^\top \tilde{Q})^\top ({U}_2'^\top \tilde{Q})$ is symmetric and therefore
		$\|E_k\|_2 = \|\Lambda({U}_2'^\top \tilde{Q})^\top ({U}_2'^\top \tilde{Q})\|_2 = \max\limits_{i=1,\dots,\ell}\lambda_i = \lambda^*$.\footnote{It is straightforward to show an analogous result for the Frobenius norm $\|E_k\|_F^2=\|A-\hat{A}_k\|_F^2=\sum_{i=1}^{\ell} [m(\lambda_i)-1] \lambda_i^2$.}
	\end{proof}

	\vspace{-2mm}
	
\section{Model Regression on Networks}
	\label{sec:sims}
	We now provide an example to demonstrate the applicability of the error bounds 
	on networked systems.
	%
	%
		Consider the Petersen graph on 10 nodes and 15 edges as shown in Fig. \ref{fig:plotPetersen}.
	We use the weighted version of this specific structure to find error bounds on a system with simple eigenvalues.
	The dynamics of this system is defined using the \emph{graph Laplacian}, defined as $\mathcal{L}=D-A$, where $A$ is the \emph{adjacency} matrix that defines the connections in the network and $D$ is the \emph{degree} matrix defined as $D_{ii}=\sum_j|W_{ij}|$; in this case $W_{ij}$ is the weight of the edge between nodes $i$ and $j$.
	Network symmetries typically induce eigenvalue multiplicities in the corresponding adjacency and Laplacian matrices.
	Hence to make the system more generic, we add weights $w_{1,6}=1$, $w_{2,7}=2$, $w_{3,8}=3$, $w_{4,9}=4$, and $w_{5,10}=5$ and for all other weights we have $w_{i,j}=1$.
	For each component $i$ the dynamics depend on the adjacent nodes in the graph $\dot{x}_i = \sum_j |W_{ij}|(x_i-x_j)$.
	Then the overall dynamics can be written as $\dot{x}=-\mathcal{L}x$.
	The model regression algorithm discussed in this paper leads to the error shown in Fig. \ref{fig:plotPetersen}.
	For this simulation, the initial condition has been chosen as a (normalized) random vector $x_0\in\mathbb{R}^{10}$.
	The upper subfigure shows the comparison of the bound for general case using the spectral norm and the lower subfigure demonstrates the same setup for \ref{section:Symmetry}.
	It can be seen that the error converges to zero after $k=10$ steps, since the system matrix
	has simple eigenvalues.
	\begin{figure}[h]
	\vspace{-.05in}
		\raisebox{12mm}{\includegraphics[scale=.07]{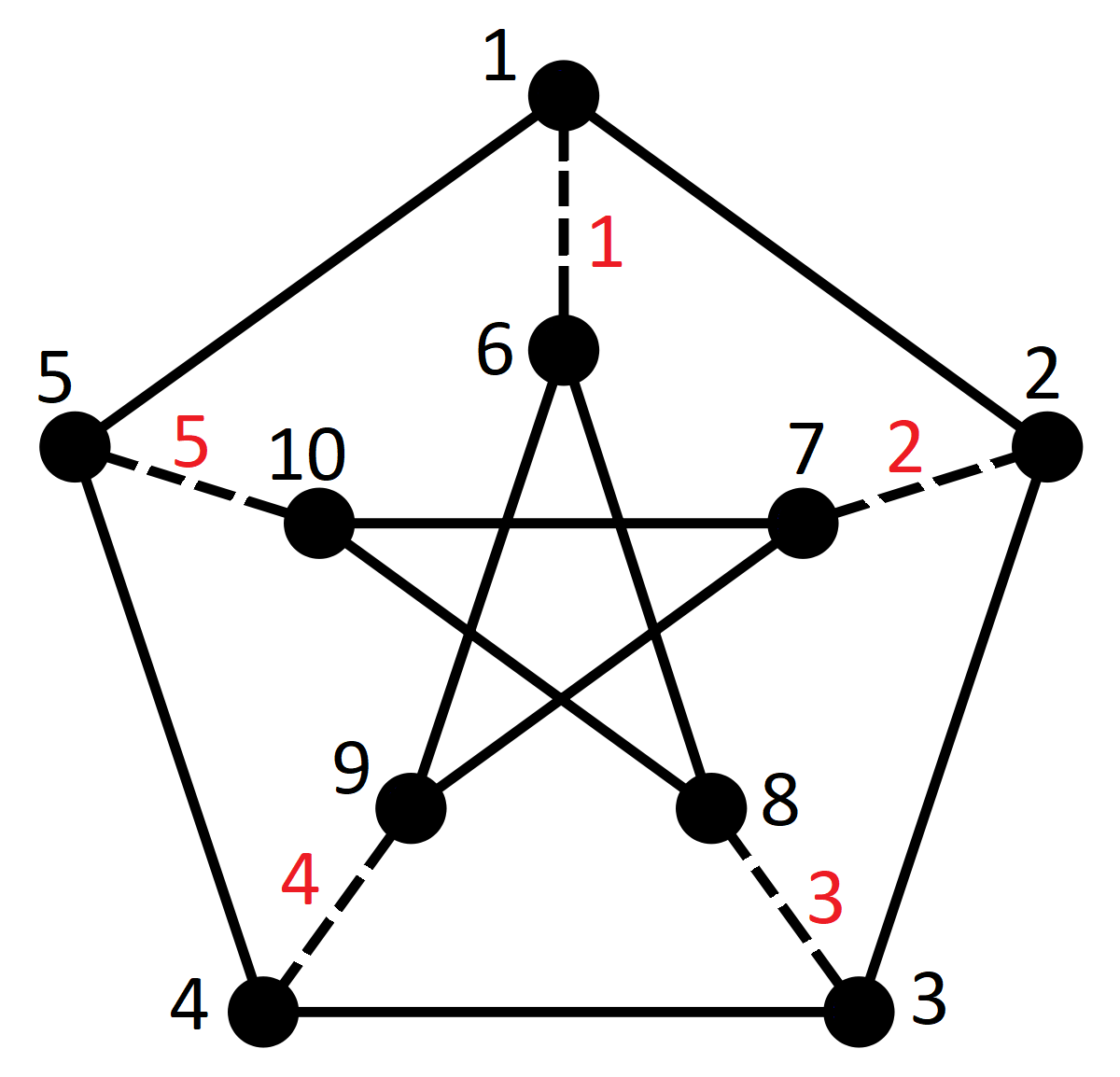}}
		\includegraphics[width=.78\linewidth]{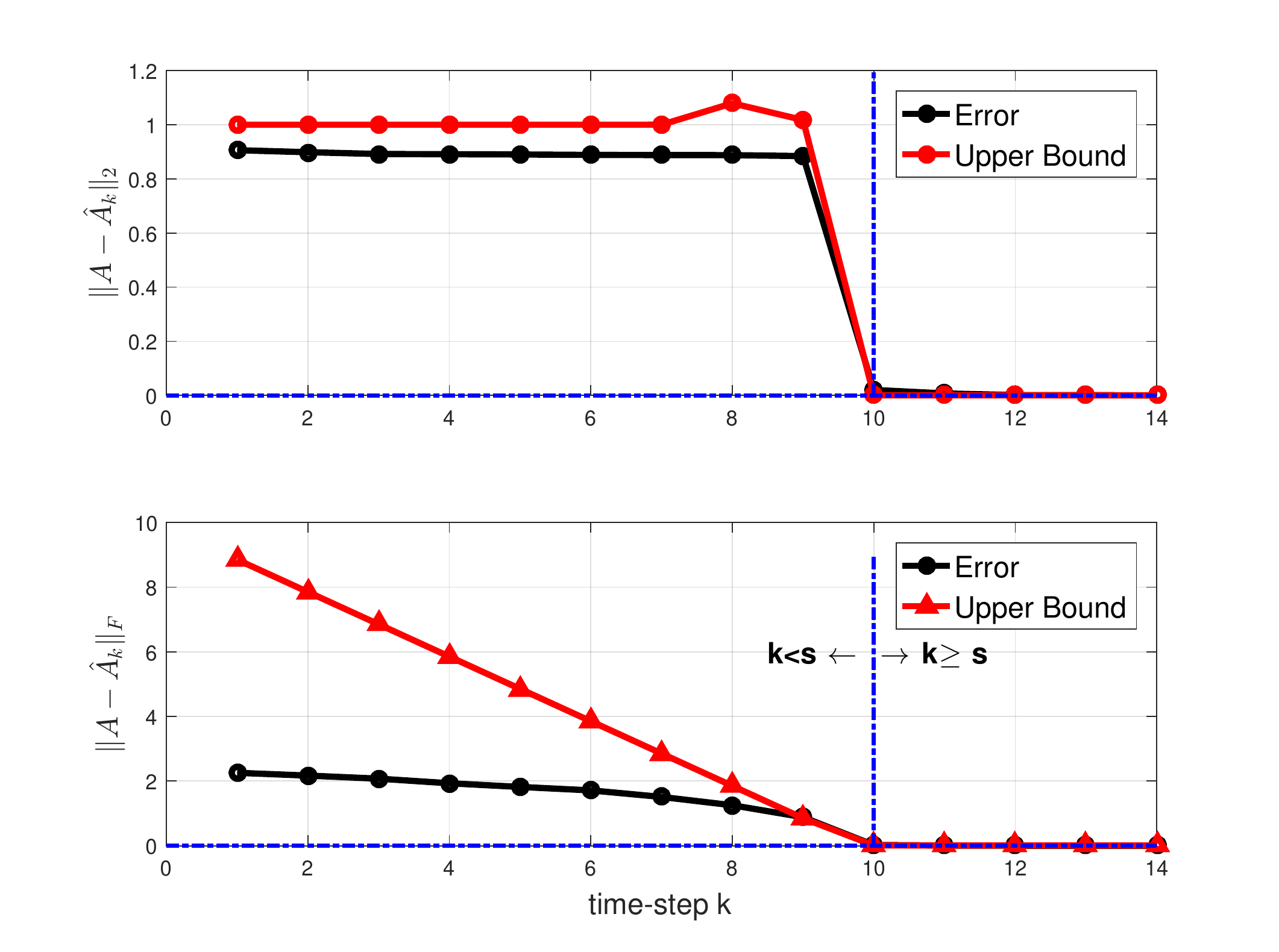}
		\caption{(left) weighted Petersen graph; (right): model regression error the corresponding theoretical error bound}
		\label{fig:plotPetersen}
	\end{figure}

\vspace{-1mm}

\section{CONCLUSION}
\label{sec:conclusion}

In this paper we consider the regression approach for learning linear time-invariant dynamic models from time-series data. In particular, we showed how the richness in the data as well as spectral properties of the model, dictate fundamental bounds on the error obtained from the streaming model regression. Our subsequent works will utilize these insights to provide an active learning mechanism that has the dual role of reducing the regression error in addition to achieving auxiliary control objectives.

%

%
%

	\bibliographystyle{ieeetr}
	\bibliography{citations}

\begin{thebibliography}{10}

\bibitem{ljung1998system}
L.~Ljung, ``System identification,'' in {\em Signal analysis and prediction},
  pp.~163--173, Springer, 1998.

\bibitem{hardt2016gradient}
M.~Hardt, T.~Ma, and B.~Recht, ``Gradient descent learns linear dynamical
  systems,'' {\em arXiv preprint arXiv:1609.05191}, 2016.

\bibitem{campi2002finite}
M.~C. Campi and E.~Weyer, ``Finite sample properties of system identification
  methods,'' {\em IEEE Transactions on Automatic Control}, vol.~47, no.~8,
  pp.~1329--1334, 2002.

\bibitem{oymak2018non}
S.~Oymak and N.~Ozay, ``Non-asymptotic identification of lti systems from a
  single trajectory,'' {\em arXiv preprint arXiv:1806.05722}, 2018.

\bibitem{dean2017sample}
S.~Dean, H.~Mania, N.~Matni, B.~Recht, and S.~Tu, ``On the sample complexity of
  the linear quadratic regulator,'' {\em arXiv preprint arXiv:1710.01688},
  2017.

\bibitem{simchowitz2018learning}
M.~Simchowitz, H.~Mania, S.~Tu, M.~I. Jordan, and B.~Recht, ``Learning without
  mixing: Towards a sharp analysis of linear system identification,'' {\em
  arXiv preprint arXiv:1802.08334}, 2018.

\bibitem{boots2009learning}
B.~Boots, ``Learning stable linear dynamical systems,'' {\em Online]. Avail.:
  https://www. ml. cmu. edu/research/dap-papers/dap\_boots. pdf}, 2009.

\bibitem{abbasi2011regret}
Y.~Abbasi-Yadkori and C.~Szepesv{\'a}ri, ``Regret bounds for the adaptive
  control of linear quadratic systems,'' in {\em Proceedings of the 24th Annual
  Conference on Learning Theory}, pp.~1--26, 2011.

\bibitem{mauroy2016linear}
A.~Mauroy and J.~Goncalves, ``Linear identification of nonlinear systems: A
  lifting technique based on the koopman operator,'' in {\em 55th Conference on
  Decision and Control}, pp.~6500--6505, IEEE, 2016.

\bibitem{de2017controllability}
M.~H. de~Badyn, S.~Alemzadeh, and M.~Mesbahi, ``Controllability and data-driven
  identification of bipartite consensus on nonlinear signed networks,'' in {\em
  56th Conference on Decision and Control}, pp.~3557--3562, IEEE, 2017.

\bibitem{brunton2016koopman}
S.~L. Brunton, B.~W. Brunton, J.~L. Proctor, and J.~N. Kutz, ``Koopman
  invariant subspaces and finite linear representations of nonlinear dynamical
  systems for control,'' {\em PloS one}, vol.~11, no.~2.

\bibitem{fiechter1997pac}
C.-N. Fiechter, ``{PAC} adaptive control of linear systems,'' in {\em
  Conference on Computational learning theory}, pp.~72--80, ACM, 1997.

\bibitem{parsa2018hierarchical}
B.~Parsa, K.~Rajasekaran, F.~Meier, and A.~G. Banerjee, ``A hierarchical
  bayesian linear regression model with local features for stochastic dynamics
  approximation,'' {\em arXiv preprint arXiv:1807.03931}, 2018.

\bibitem{alemzadeh2018influence}
S.~Alemzadeh and M.~Mesbahi, ``Influence models on layered uncertain networks:
  A guaranteed-cost design perspective,'' {\em arXiv preprint
  arXiv:1807.06612}, 2018.

\bibitem{tu2013dynamic}
J.~H. Tu, C.~W. Rowley, D.~M. Luchtenburg, S.~L. Brunton, and J.~N. Kutz, ``On
  dynamic mode decomposition: theory and applications,'' {\em arXiv preprint
  arXiv:1312.0041}, 2013.

\bibitem{yeung2002reverse}
M.~S. Yeung, J.~Tegn{\'e}r, and J.~J. Collins, ``Reverse engineering gene
  networks using singular value decomposition and robust regression,'' {\em
  Proceedings of the National Academy of Sciences}, vol.~99, no.~9,
  pp.~6163--6168, 2002.

\bibitem{alemzadeh2017controllability}
S.~Alemzadeh, M.~H. de~Badyn, and M.~Mesbahi, ``Controllability and
  stabilizability analysis of signed consensus networks,'' in {\em Conference
  on Control Technology and Applications (CCTA)}, pp.~55--60, IEEE, 2017.

\bibitem{bullo2009distributed}
F.~Bullo, J.~Cortes, and S.~Martinez, {\em Distributed control of robotic
  networks: a mathematical approach to motion coordination algorithms},
  vol.~27.
\newblock Princeton University Press, 2009.

\bibitem{alaeddini2017adaptive}
A.~Alaeddini, K.~Morgansen, and M.~Mesbahi, ``Adaptive communication networks
  with privacy guarantees,'' in {\em American Control Conference (ACC)},
  pp.~4460--4465, May 2017.

\end{thebibliography}

\end{document}